\newtheorem{theorem}{Theorem}
\begin{document}

\title{A Path Planning Algorithm for a Hybrid UAV Traveling in Noise Restricted Zones} 
\markboth{IEEE Transactions on Aerospace and Electronic Systems. Preprint Version.}{IEEE Transactions on Aerospace and Electronic Systems. Preprint Version.}

\author{Saurabh Belgaonkar\, \orcidlink{0000-0002-7628-4930}}
\author{Deepak Prakash Kumar \, \orcidlink{0000-0001-6446-2077}, ~\IEEEmembership{Student Member,~IEEE}}

\author{Sivakumar Rathinam\, \orcidlink{0000-0002-9223-7456},  ~\IEEEmembership{Senior Member,~IEEE}}

\author{Swaroop Darbha\, \orcidlink{0000-0001-8377-0657}, ~\IEEEmembership{Fellow,~IEEE}}

\author{Trevor Bihl\, \orcidlink{0000-0003-2431-2749},  ~\IEEEmembership{Senior Member,~IEEE}}

\receiveddate{}

\corresp{\itshape \ Corresponding author: Saurabh Belgaonkar.}

\authoraddress{
Saurabh Belgaonkar (\href{mailto:saurabhbelgaonkar@tamu.edu}{saurabhbelgaonkar@tamu.edu}), Deepak Prakash Kumar (\href{mailto:deepakprakash1997@gmail.com}{deepakprakash1997@gmail.com}), and Swaroop Darbha (\href{mailto:dswaroop@tamu.edu}{dswaroop@tamu.edu}) are affiliated with the Mechanical Engineering Department of Texas A\&M University, College Station. Sivakumar Rathinam (\href{mailto:srathinam@tamu.edu}{srathinam@tamu.edu}) is affiliated with the Computer Science and Engineering Department and the Mechanical Engineering Department of Texas A\&M University, College Station.
Trevor Bihl (\href{mailto:trevor.bihl.2@afrl.af.mil}{trevor.bihl.2@afrl.af.mil}) is affiliated with Air Force Research Laboratory (AFRL), Dayton.
}
\markboth{Belgaonkar ET AL.}{A Path Planning Algorithm for a HUAV Traveling in Noise Restricted Zones}
\maketitle
\begin{abstract}
This paper presents an integrated approach for efficient path planning and energy management in hybrid unmanned aerial vehicles (HUAVs) equipped with dual fuel-electric propulsion systems. These HUAVs operate in environments that include noise-restricted zones, referred to as quiet zones, where only electric mode is permitted. We address the problem by parameterizing the position of a point along the side of the quiet zone using its endpoints and a scalar parameter, which transforms the problem into a variant of finding the shortest path over a graph of convex sets. We formulate this problem as a mixed-integer convex program (MICP), which can be efficiently solved using commercial solvers. Additionally, a tight lower bound can be obtained by relaxing the path-selection variable. Through extensive computations across 200 instances over four maps, we show a substantial improvement in computational efficiency over a state-of-the-art method, achieving up to a 100-fold and 10-fold decrease in computation time for calculating the lower bound and the exact solution, respectively. Moreover, the average gap between the exact cost and the lower bound was approximately 0.24\%, and the exact cost was 1.05\% lower than the feasible solution obtained from the state-of-the-art approach on average, highlighting the effectiveness of our proposed method. We also extend our approach to plan the route of the HUAV to visit a set of targets and return to its starting location in environments with quiet zones, yielding a Traveling Salesman Problem (TSP). We employ two methodologies to solve the TSP: one wherein the battery's State of charge (SOC) at each target is discretized, whereas, in another approach, the battery's SOC at each target is assumed to be the minimum allowable level when the HUAV departs it. A comparative analysis reveals that the second method achieves a cost within 1.02\% of the first method on average while requiring significantly less computational time.
\end{abstract}

\begin{IEEEkeywords}
Graph of convex set (GCS), Hybrid Unmanned Vehicle (HUAV), Noise-restricted Zones, Mixed integer convex program (MICP), Path Planning, Traveling salesman problem (TSP)
\end{IEEEkeywords}

\section{INTRODUCTION}
In the field of urban air mobility (UAM), electric propulsion is the predominant technology, particularly for applications such as drone delivery \cite{goyal2018urban,cohen2021urban}. However, the limited flight time of electric unmanned aerial vehicles (UAVs) poses a significant barrier to their potential for reducing carbon emissions \cite{button2021faith, mohsan2022towards}. Hybridization, which combines fuel engines with electric propulsion, provides a promising solution by extending the operational range and flight time of these vehicles \cite{townsend2020comprehensive, kusmierek2023review, tao2019state, farajollahi2022hybrid}. 
Various efforts have been made to study the impacts of UAV sound on humans, including noise-induced hearing loss \cite{kronoupsiloneter1970airplane} and annoyance from drone usage in urban centers \cite{watkins2020ten}. As UAM evolves, particularly in densely populated urban areas, managing the noise pollution generated by fuel engines becomes an increasingly critical concern \cite{rizzi2020urban,cohen2021urban,kusmierek2023review}. 

In this paper, a series-hybrid UAV \cite{lieh2011design, an2024design} is considered, where propellers are driven by electric motors powered either by a fuel engine generator or a battery. Although the fuel engine extends the UAV's range, it significantly contributes to noise, which is a major concern for applications such as residential deliveries and military surveillance \cite{kusmierek2023review, dombrovschi2024acoustic}. Operating solely in electric mode---thereby eliminating the use of the fuel engine--- reduces noise emissions \cite{kim2018review, dombrovschi2024acoustic}. Consequently, the hybrid configuration offers a balance between extended flight time and reduced noise emission. The HUAV operates in two modes: 1) In the {\it electric mode}, propellers are powered only by battery, 2) in the {\it fuel mode}, fuel is used to power the propellers and charge the battery. We assume that the system allows for seamless switching between fuel and electric modes, with the activation of the fuel engine occurring at zero cost. Furthermore, the HUAVs considered in this study are capable of making sharp turns, allowing us to neglect any kinematic constraints in the path planning.
\begin{figure}[htb!]
 \begin{subfigure}[b]{0.75\linewidth}
     \centering
     \includegraphics[width=\textwidth]{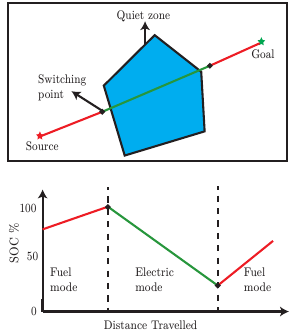}
     \caption{A feasible path between two locations alongside its corresponding battery charge profile.}
     \label{subfig: feasible_path}
 \end{subfigure}
 \hspace{0.05\linewidth}
 \begin{subfigure}[b]{0.75\linewidth}
     \centering
     \includegraphics[width=\textwidth]{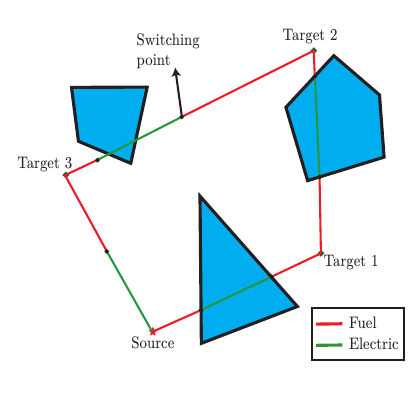}
     \caption{A feasible path for a TSP, including mode switching points.}
     \label{subfig: feasible_path_TSP}
 \end{subfigure}
    \caption{Example of feasible paths (a) for a path planning problem between two locations, (b) for a Traveling Salesman Problem (TSP) involving multiple locations.}
    \label{fig:Feasible_path_figs}
\end{figure}

The objective is not only to determine an optimal path for the HUAV but also to develop an optimal strategy for utilizing the two propulsion modes, minimizing fuel consumption while adhering to the vehicle battery's operational limits during navigation. A key scenario involves quiet zones, such as residential neighborhoods, where the use of gas engines is restricted due to their elevated noise levels. Fig. \ref{subfig: feasible_path} illustrates how the HUAV switches to electric-only mode in these zones, and how the battery charges during fuel-only mode and discharges during the electric-only mode. This problem can be generalized to finding an optimal tour for a HUAV that visits multiple designated targets as illustrated in Fig. \ref{subfig: feasible_path_TSP}, minimizing fuel consumption while considering operational constraints such as maintaining the battery's SOC and complying with noise restrictions in quiet zones. This generalization can be viewed as a Traveling Salesman Problem (TSP) for the HUAV traveling in noise-restricted zones. 

Although quiet zones might initially be considered obstacles in HUAV path-planning algorithms, this analogy doesn't fully capture the unique challenges involved. Traditional methods—such as Rapidly-exploring Random Trees (RRT) \cite{karaman2011anytime}, Probabilistic Roadmap Methods (PRM) \cite{geraerts2004comparative}, Voronoi diagrams \cite{aurenhammer1991voronoi}, and potential fields \cite{vadakkepat2000evolutionary}—focus on navigating around physical obstacles to ensure collision-free paths. However, our work differs significantly from conventional obstacle avoidance. We emphasize enforcing the restriction to electric mode within designated quiet zones to minimize noise instead of avoiding the quiet zone. 

Previous work \cite{adlakha2023integration} integrates noise awareness into grid-based UAV path planning, while \cite{sarhan2025noise} employs reinforcement learning to minimize UAV noise in urban environments. However, these approaches focus solely on conventional UAVs without considering propulsion mode switching. In contrast, this work addresses noise-aware path planning for a hybrid UAV, explicitly accounting for transitions between fuel and electric propulsion modes, which adds complexity not considered in previous studies.

Planning in the presence of operational constraints imposed by quiet zones is challenging, as the specific quiet zones to traverse, along with their entry and exit points, are not known a priori. One approach to address this challenge involves discretizing the boundaries of quiet zones and the battery SOC, thereby transforming the problem into a graph-based discrete planning problem \cite{manyam2022path}. In this formulation, nodes represent discretized spatial points and SOC levels, while the feasibility and cost of edges are computed subsequently. Dijkstra's algorithm is then used to determine the least-cost path, identifying the optimal route and the switching points between battery and fuel modes. While this method is effective, it becomes computationally intensive for large-scale maps with multiple quiet zones due to the significant level of discretization required. 

A similar problem was tackled in \cite{scott2024noise}, denoted as The Noise-Restricted Hybrid-Fuel Shortest Path Problem (NRHFSPP). In \cite{scott2024noise}, the authors focused on coupled path and power planning for hybrid UAVs in noise-restricted environments. Their discrete optimization framework, which is based on a labeling algorithm, efficiently computes the optimal path and generator schedule on large graphs. However, the approach prohibits mode switching along a single edge, limiting flexibility in scenarios where long edges may require switching. Additionally, solving the problem in a discretized position space can lead to the creation of large graphs, significantly increasing computational time.

Unlike previous methods that discretize the battery's SOC and the hybrid UAV's (HUAV) positions along quiet-zone boundaries, our approach addresses the problem directly in continuous space. The key idea is to parameterize positions along the boundaries of the quiet zones—modeled as closed convex sets—using their endpoints and a scalar parameter. This reformulation transforms the problem into a variant of finding the shortest path over a graph of convex sets \cite{marcucci2024shortest}.

By incorporating SOC constraints and a cost based on the distance traveled in fuel mode, we develop a mixed-integer bilinear formulation. Through appropriate reformulation of the bilinear terms, the problem is expressed as a mixed-integer convex program (MICP) that can be efficiently solved using commercial solvers. This approach not only identifies the optimal path but also determines the optimal mode-switching points along the route.
In addition, we can obtain a tight lower bound to the path-planning problem by relaxing the path-selection variable in the formulated MICP. Numerical experiments, using the discrete approach from \cite{manyam2022path} as a baseline, reveal that our method significantly outperforms the discrete approach in terms of optimization time. We also generalize our approach to solve the TSP for a HUAV traveling in noise-restricted zones.
The structure of this paper is as follows: Section \ref{sec:prob_statement} outlines the problem statement. The mixed integer program and its reformulation are presented in sections \ref{sec:MIBP} and \ref{sec:RMICP} respectively. Section \ref{sec:TSP} extends the approach to solve the TSP. Finally, results are presented in section \ref{sec:Results} with concluding remarks in section \ref{sec:conclusions}.

\section{Problem Statement}\label{sec:prob_statement}

We assume that the HUAV operates in a two-dimensional ($2\mathrm{D}$) plane, where quiet zones are represented as convex polygons.\footnote{If a quiet zone is not convex, we first compute its convex hull to approximate it as a convex polygon.} Since there is only one HUAV, it will also be referred to as \enquote{the vehicle} in this paper. Let the vehicle travel at a constant speed along any assigned path from the source ($s$) to the destination ($g$); hence, the distance traversed by the vehicle is directly proportional to its travel time. Let the SOC of the vehicle at a given position $x$ be denoted by $q(x)$. For any position $x$ along the vehicle's path, we require $q(x)$ to lie within specified bounds, i.e., $q_{min} \leq q(x) \leq q_{max}$. If the vehicle is traveling in {\it fuel mode}, the rate of SOC depletion per unit distance is denoted by $\alpha$. If the vehicle is traveling in {\it electric mode}, the rate of SOC replenishment per unit distance is denoted by $\beta$. 

\begin{figure}
    \centering
    \includegraphics[width=1\linewidth]{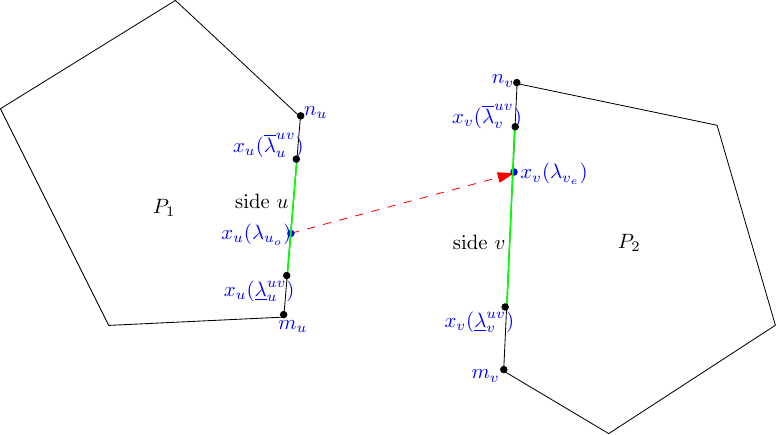}
    \caption{Travel between two sides corresponding to different quiet zones. The green segments of the sides $u$
and $v$ represent the portions that can be directly traveled between without entering any other quiet zone. }
    \label{fig:notations}
\end{figure}

Let \(P = \{Q_1, Q_2, \dots, Q_m\}\) denote the set of quiet zones. Let $\mathcal{V}_s := \{1, 2, \dots, n\}$ denote the set of all sides of the quiet zones in $P$. 
Define $\mathcal{V}:=\mathcal{V}_s \cup \{s, g\}$. The elements of $\mathcal{V}$ are referred to as nodes in our problem. The positions of the endpoints of any side \( v \in \mathcal{V}_s \) is denoted by \( m_v \) and \( n_v \), as shown in Fig. \ref{fig:notations}. The position of any point on the side \(v \) can be expressed using a parameter \(\lambda_v \in [0,1]\) as \(x_v(\lambda_v) = \lambda_v m_v + (1 - \lambda_v) n_v \). 

Given any two distinct sides \( u \) and \( v \) belonging to the same quiet zone, the vehicle can travel directly from any point on \( u \) to any point on \( v \) using only electric mode. On the other hand, if the two sides $u$ and $v$ belong to different quiet zones, we specify parts of the sides that can be directly traversed between each other without entering any other quiet zone. These are defined through the {\it path boundary constraints}. Consider the parameters\footnote{In the appendix, we provide an algorithm for computing these parameters.} $\underline{\lambda}^{uv}_u$ and $\overline{\lambda}^{uv}_u$ corresponding to side $u$, and parameters $\underline{\lambda}^{uv}_v$ and $\overline{\lambda}^{uv}_v$ corresponding to side $v$ (refer to Fig. \ref{fig:notations}). If the vehicle chooses to travel from $u$ to $v$, the {\it path boundary constraints} state that the vehicle must travel from a point in the line segment joining $x_u(\underline{\lambda}^{uv}_u)$ and $x_u(\overline{\lambda}^{uv}_u)$ to a point in the line segment joining  $x_v(\underline{\lambda}^{uv}_v)$ and $x_v(\overline{\lambda}^{uv}_v)$. These constraints allow the vehicle to directly travel between sides of two distinct quiet zones without entering the interior of any other quiet zone. Similar parameters are also defined for the travel between the source (or goal) and any side in $\mathcal{V}_s$.

A path for the vehicle is defined as a sequence of nodes in $\mathcal{V}$ visited by the vehicle. The objective of the path planning problem is to determine (i) a simple path $\mathcal{P}$ for the vehicle that starts at $s$ and ends at $g$, (ii) the position of the vehicle as it enters and exits any node in $\mathcal{P}$, and (iii) the portion of the travel between any two adjacent nodes in $\mathcal{P}$ during which the vehicle operates in the fuel mode, such that the following constraints are satisfied:

\begin{itemize} 
\item {\it SOC constraints:} The state of charge (SOC) of the vehicle at any point along $\mathcal{P}$ remains within the specified bounds. 
\item {\it Quiet zone constraints:} The vehicle operates only in electric mode when traveling within the interior of any quiet zone. 
\item {\it Path boundary constraints:} $\mathcal{P}$ satisfies the path boundary constraints.
\item {\it Distance minimization in fuel mode constraint:} The sum of the Euclidean distances traveled by the vehicle in the fuel mode along $\mathcal{P}$ is minimized. \end{itemize}
An illustration of a feasible path for the HUAV is shown in Fig. \ref{fig:samplepath}.

\begin{figure}
    \centering
    \includegraphics[width=1\linewidth]{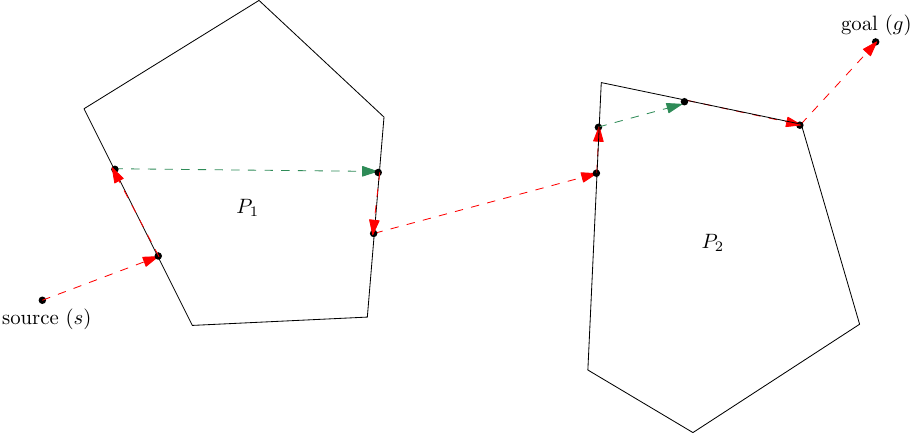}
    \caption{A sample path for the HUAV crossing two quiet zones $P_1$ and $P_2$. The dark green arrows show the portions where the vehicle is traveling in {\it electric mode} and the red arrows show the portions where the HUAV is traveling in {\it fuel mode}.}
    \label{fig:samplepath}
\end{figure}

\section{ Mixed Integer Bilinear Program (MIBP)}\label{sec:MIBP}
The MIBP is formulated on the graph $\mathcal{G}=(\mathcal{V},\mathcal{E})$ where $\mathcal{E}:=\mathcal{E}_{\text{in}} \bigcup \mathcal{E}_{\text{o}}$ consists of two types of edges as follows:

\begin{itemize}    
    \item \textbf{Intra-Quiet Zone Edges (\( \mathcal{E}_{\text{in}} \)):} Any edge \( e = (u,v)\) is in $\mathcal{E}_{\text{in}}$ if and only if \(u, v \in \mathcal{V}_s\) and they connect sides within the same quiet zone. As a result, the vehicle can only travel in the electric mode along any edge in $\mathcal{E}_{\text{in}}$. 

    \item \textbf{Inter-Quiet Zone Edges (\( \mathcal{E}_{\text{o}} \)):} Any edge \( e = (u,v)\) is in $\mathcal{E}_{\text{o}}$ if and only if \(u, v \in \mathcal{V}\) and they either connect sides from distinct quiet zones or connect the start/goal node to a side of a quiet zone. As a result, such edges lie entirely outside the interior of the quiet zones, allowing the use of both fuel and electric modes. 
\end{itemize}

\newcolumntype{L}[1]{>{\raggedright\arraybackslash}p{#1}} 
\begin{table}[h!]
    \caption{Decision variables in MIBP.}
    \label{tab:decvar}
    \centering
    \begin{tabularx}{\columnwidth}{|L{1.7 cm}|L{1.5cm}|L{3.9cm}|}
        \hline
        \textbf{Decision Variable} & \textbf{Variable Type} & \textbf{Definition} \\
        \hline
        $y_{uv}$ for edge $(u,v)\in \mathcal{E}$ & Binary & $y_{uv}=1$ if the vehicle travels from $u$ to $v$; $y_{uv}=0$ otherwise. \\
        \hline
        $w_v$ for $v \in \mathcal{V}_s$ & Binary & $w_v=1$ if the vehicle travels along side $v$; $w_v=0$ otherwise. \\
        \hline
        $z_{uv}$ for edge $(u,v)\in \mathcal{E}$ & Continuous & $z_{uv}$ is the distance traveled in fuel mode from $u$ to $v$. \\
        \hline
        $L_{uv}$ for edge $(u,v)\in \mathcal{E}$ & Continuous & $L_{uv}$ is the total distance traveled from $u$ to $v$. \\
        \hline
        $p_v$ for $v \in \mathcal{V}_s$ & Continuous & $p_v$ is the distance traveled in fuel mode along side $v$. \\
        \hline
        $d_v$ for $v \in \mathcal{V}_s$ & Continuous & $d_v$ is the total distance traveled along side $v$. \\
        \hline
        $q_{v_e}$ for $v \in \mathcal{V}$ & Continuous & The SOC of the vehicle upon entering node $v$. \\
        \hline
        $q_{v_o}$ for $v \in \mathcal{V}$ & Continuous & The SOC of the vehicle upon leaving node $v$. \\
        \hline
        $\lambda_{v_e}$ for $v \in \mathcal{V}_s$ & Continuous & Positional parameter when entering side $v$. \\
        \hline
        $\lambda_{v_o}$ for $v \in \mathcal{V}_s$ & Continuous & Positional parameter when leaving side $v$. \\
        \hline
    \end{tabularx}
\end{table}

All the decision variables used in the MIBP are defined in Table \ref{tab:decvar}. We will first state the obvious requirement on the path variables before formulating each constraint in the problem statement. $y_{uv}$ for edge $(u,v)\in \mathcal{E}$ must represent a valid path from the source to the goal node. This requirement is stated using the following flow constraints:

\begin{align}
    \sum_{(s,v) \in \mathcal{E}}  y_{sv} &= 1 , \
    \sum_{(v,g) \in \mathcal{E}}  y_{vg} = 1.\label{source_sink_constraint} \\
     \sum_{(u,v) \in \mathcal{E}}  y_{vu} &= \sum_{(u,v), \in \mathcal{E}} y_{uv},  \quad  \forall v \in \mathcal{V}_s.\label{flow_constraints}
\end{align}
\begin{align}
     \sum_{(u,v) \in \mathcal{E}} y_{vu}  & \leq w_{v}   \quad  \forall v \in \mathcal{V}_s.\label{flow_constraints_max_one} \\
  y_{uv}\in\{0,1\} ~\forall (u,v) & \in \mathcal{E},  ~~ w_v\in\{0,1\}  ~\forall v \in \mathcal{V}_s.
\end{align}

\subsection{\it SOC constraints:} This constraint requires the SOC of the vehicle at any point along $\mathcal{P}$ remain within the specified bounds. This can be stated using the variables $q_{v_e}$ and $q_{v_o}$ for $v\in \mathcal{V}$. As the vehicle traverses from node \( u \) to \( v \), its SOC changes depending on the portion of its travel in the fuel mode. For any inter-quiet zone edge in $\mathcal{E}_{o}$, the SOC of the vehicle ($q_{v_e}$) when it arrives at node $v$ can be computed as follows: 
\begin{align}
        &  q_{v_e} y_{uv} {=}   q_{u_o} y_{uv} {+}  \beta z_{uv} y_{uv} {-} \alpha (L_{uv} {-} z_{uv})  y_{uv} \  \   \forall (u,v) \in \mathcal{E}_o.    \label{SOC_constraints_o} 
\end{align}
Note that the above constraint is switched on only if $y_{uv}=1$; otherwise it is vacuous. Similarly, for any intra-quiet zone edge in  $\mathcal{E}_{in}$, the SOC of the vehicle ($q_{v_e}$) when it arrives at node $v$ can be obtained as follows: 
\begin{align}
        & q_{v_e} y_{uv} =  q_{u_o} y_{uv}  -  \alpha L_{uv}  y_{uv} \ \  \forall (u,v) \in \mathcal{E}_{in}. \label{SOC_constraints_i} 
\end{align}
We also allow for the vehicle to travel along a side in $\mathcal{V}_s$. To keep track of the SOC of the vehicle during this motion, we have the following constraint:
\begin{align}
        &  q_{v_o} w_{v} =   q_{v_e} w_{v} + \beta z_{v} w_{v}  -  \alpha (d_{v} - p_{v}) w_{v}  \quad  \forall v \in \mathcal{V}_s. \     \label{SOC_constraints_same} 
\end{align}
Once we keep track of the SOC of the vehicle at all  times, we can now state the bounding constraints of the vehicle's SOC as follows: 
\begin{align}
        q_{\text{min}}  \leq  q_{v_e}  \leq q_{\text{max}}, ~q_{\text{min}}  \leq  q_{v_o}  \leq q_{\text{max}}  \quad \ \ \forall v \in \mathcal{V}.
    \label{q_bound}
\end{align}

\subsection{\it Quiet zone constraints:} These constraints are already formulated in \eqref{SOC_constraints_i} as the vehicle is only allowed to travel in the electric mode along any edge in $\mathcal{E}_{in}$.

\subsection{\it Path boundary constraints:} Consider the travel from node $u\in \mathcal{V_s}$ to node $v\in \mathcal{V_s}$. The position of the vehicle when it leaves side $u$ is denoted by $x_u(\lambda_{u_o})$. Similarly, the position of the vehicle when it arrives at side $v$ is denoted by $x_v(\lambda_{v_e})$. The parameters $\lambda_{u_o}$ and $\lambda_{v_e}$ must satisfy the following path boundary constraints:
\begin{align}
    \underline{\lambda}^{uv}_u y_{uv} \leq  \lambda_{u_{o}}  y_{uv} & \leq \overline{\lambda}^{uv}_u y_{uv}, \notag \\
    \underline{\lambda}^{uv}_v y_{uv} \leq  \lambda_{v_{e}}  y_{uv} & \leq
    \overline{\lambda}^{uv}_v y_{uv}.    \label{Lambda_bound} 
\end{align}

Similar constraints can also be stated for travel from the source $s$ to a side $v$ or from a side $u$ to the goal $g$ as follows:
\begin{align}
  \underline{\lambda}^{sv}_v y_{sv} \leq  \lambda_{v_{e}}  y_{sv} & \leq
    \overline{\lambda}^{sv}_v y_{sv}, \\
    \underline{\lambda}^{ug}_u y_{ug} \leq  \lambda_{u_{o}}  y_{ug} & \leq \overline{\lambda}^{ug}_u y_{ug}.   \label{Lambda_source_goal_bound} 
\end{align}

\subsection{\it Distance minimization in fuel mode constraint:} This constraint requires us to first calculate the Euclidean distances traveled by the vehicle in the fuel mode. Consider the travel of the vehicle from $u\in \mathcal{V}_s$ to $v\in \mathcal{V}_s$. The total distance ($L_{uv}$) traveled by the vehicle from $x_u(\lambda_{u_o})$ to $x_v(\lambda_{v_e})$ satisfies the following constraint:
\begin{align}
\forall (u, v) \in \mathcal{E}, ~~ (L_{uv} y_{uv})^2 \geq ~& \| x_u(\lambda_{u_o}) y_{uv}-x_v(\lambda_{v_e})y_{uv} \|_2^2  \nonumber \\
      =  \|n_u y_{uv} {+} \lambda_{u_o} y_{uv}(m_u {-} n_u) ~&{-} n_v y_{uv}
    {-} \lambda_{v_e} y_{uv}(m_v {-} n_v)\|_2^2. \label{eq:intra_length_constraint} 
\end{align}

Since $z_{uv}$ denotes the distance traveled by the vehicle in the fuel mode, it cannot exceed $L_{uv}$. Therefore, 
\begin{align}
        0 \leq  z_{uv} & \leq L_{uv} \quad  \forall (u,v) \in \mathcal{E}.    \label{z_bound}
\end{align}
Note that, even though \eqref{eq:intra_length_constraint} is not a strict equality,  $z_{uv}$ will never exceed the Euclidean distance between $x_u(\lambda_{u_o})$ and $x_v(\lambda_{v_e})$ as $z_{uv}$ would not be optimal otherwise. Similar constraints can also be written when the vehicle travels along a side $v\in \mathcal{V}_s$ as follows:
\begin{align}
    (d_v w_{v})^2 \geq  \|\lambda_{v_o} w_{v} -  \lambda_{v_e} w_{v}   \|^2 \|m_v & - n_v\|_2^2  \ \quad  \forall v \in \mathcal{V}_s. \label{length_constraint_same_side} \\
    0 \leq p_v  \leq d_v, \ \ 0   \leq d_v  \leq  \| m_v - & n_v\|_2^2 \quad \forall v \in \mathcal{V}_s. \label{d_bound_same_side}
\end{align}
Now that we know all the Euclidean distances traveled by the vehicle in the fuel mode through variables $z_{uv}$ and $p_v$, we can state the objective of the path planning problem.
\begin{align}
    \text{minimize} \quad \sum_{(u,v) \in \mathcal{E}} z_{uv} y_{uv} + \sum_{v \in \mathcal{V}_s} p_{v} w_{v} \label{objective} 
\end{align}
The MIBP aims to optimize \eqref{objective} subject to the requirements in \eqref{source_sink_constraint}-\eqref{d_bound_same_side}. 
This formulation includes a non-convex, bilinear objective and bilinear constraints, making it challenging to solve. In the next section, we reformulate this problem as a MICP, which can be effectively solved using commercial solvers.

\section{Reformulated mixed Integer Conic Program (R-MICP)}\label{sec:RMICP} 
The key idea in the reformulation is to address the non-convexity caused by bilinear terms by introducing new decision variables to replace these bilinear terms. To this end, the new decision variables are defined as follows:
\begin{align}
a_{uv} &= q_{u_{o}} y_{uv}  \quad \text{and} \quad a_{uv}^{'} = q_{v_{e}} y_{uv}      \quad \forall (u,v) \in \mathcal{E}, \label{sub_a} \\
b_{uv} &= z_{uv} y_{uv}  \quad \text{and}  \quad l_{uv} = L_{uv} y_{uv}  \quad \forall (u,v) \in \mathcal{E} \label{sub_z}, \\
c_{uv} &= \lambda_{u_{o}} y_{uv} \quad \text{and} \quad c_{uv}^{'} = \lambda_{v_{e}} y_{uv}   \quad \forall (u,v) \in \mathcal{E}, \label{sub_c} \\
e_{v} &= p_{v} w_{v}  \ \ \  \quad   \text{and}  \quad f_{v} = d_{v} w_{v}    \ \ \ \quad  \forall v \in \mathcal{V}_s. \label{sub_same}
\end{align}
Subsequently, all the equations in  \eqref{source_sink_constraint}-\eqref{objective} will be reformulated
in terms of decision variables $a_{uv}$, $a_{uv}^{'}$, $c_{uv}$, $c_{uv}^{'}$, $b_{uv}$, $l_{uv}$, $e_{v}$, $f_{v}$ and $y_{uv}$. We will first start by reformulating the main constraints and then proceed to the flow constraints in the problem.

\subsection{\it Reformulated SOC constraints}  
By substituting \eqref{sub_a} and \eqref{sub_z} into the bilinear constraints \eqref{SOC_constraints_o} and \eqref{SOC_constraints_i}, a set of linear constraints in terms of $a_{uv}$, $a_{uv}^{'}$, $b_{uv}$, and $l_{uv}$ can be derived. For any inter-quiet zone edge in $\mathcal{E}_o$, the reformulated constraints are given as:  
\begin{align}
        & a_{uv}^{'} = a_{uv} + \beta b_{uv} - \alpha (l_{uv} - b_{uv}) \quad \ \  \forall (u,v) \in \mathcal{E}_o. \label{SOC_constraints_o_new}
\end{align}
Similarly, for any intra-quiet zone edge in $\mathcal{E}_{in}$, the corresponding reformulation is:  
\begin{align}
        & a_{uv}^{'} = a_{uv} - \alpha l_{uv} \quad \forall (u,v) \in \mathcal{E}_{in}. \label{SOC_constraints_i_new}
\end{align}
Boundary conditions for the decision variables $a_{uv}$ and $a_{uv}^{'}$ are imposed to ensure their deactivation, i.e., they are set to zero when the corresponding path variable $y_{uv}$ is deactivated ($y_{uv} = 0$). These boundary constraints are derived by multiplying \eqref{q_bound} by $y_{uv}$ and substituting \eqref{sub_a}. The resulting constraints are as follows:
\begin{equation}
\begin{aligned}
    q_{\text{min}} y_{uv} &\leq a_{uv} \leq q_{\text{max}} y_{uv} \quad  \forall (u,v) \in \mathcal{E} \\
    q_{\text{min}} y_{uv} &\leq a_{uv}^{'} \leq q_{\text{max}} y_{uv} \quad \forall (u,v) \in \mathcal{E}.
\end{aligned}
\label{q_bound_new}
\end{equation}

\subsection{\it Reformulated path boundary constraints:} 
Boundary conditions for the decision variables $c_{uv}$ and $c_{uv}^{'}$ are imposed to ensure their deactivation, i.e., they are set to zero when the corresponding path variable $y_{uv}$ is deactivated ($y_{uv} = 0$).  To this end, the boundary constraints for $c_{uv}$ and $c_{uv}^{'}$ can be obtained by substituting \eqref{sub_c} in \eqref{Lambda_bound} as follows:
\begin{align}
     \underline{\lambda}^{uv}_u y_{uv} \leq  c_{uv} \leq \overline{\lambda}^{uv}_u y_{uv} \quad \forall (u,v) \in \mathcal{E}. \notag \\
     \underline{\lambda}^{uv}_v y_{uv} \leq  c_{uv}^{'}  \leq
    \overline{\lambda}^{uv}_v y_{uv}    \quad \forall (u,v) \in \mathcal{E}. \label{c_bound} 
\end{align}
Similarly, boundary constraints when travelling from the source $s$ to a side $v$ or from a side $u$ to the goal $g$ is as follows:
\begin{align}
  \underline{\lambda}^{sv}_v y_{sv} \leq  c_{sv} & \leq
    \overline{\lambda}^{sv}_v y_{sv} \quad \forall (s,v) \in \mathcal{E}. \\
    \underline{\lambda}^{ug}_u y_{ug} \leq  c_{ug} & \leq \overline{\lambda}^{ug}_u y_{ug} \quad \forall (u,g) \in \mathcal{E}.  \label{eq:c_source_goal_bound} 
\end{align}

\subsection{\it Reformulated distance minimization in fuel mode constraints}
The constraint \eqref{eq:intra_length_constraint} on the travel distance between two sides can be expressed in terms of new decision variables \( c_{uv} \), \( c_{uv}^{'} \) and $l_{uv}$ (\eqref{sub_z} and \eqref{sub_c}). This reformulation results in \eqref{eq:intra_length_constraint_c}.
\begin{equation}
\begin{split}
    l_{uv}^2 \geq \big\| n_u y_{uv} + c_{uv}(m_u - n_u) - n_v y_{uv} \\
    \quad  - c_{uv}^{'} (m_v - n_v) \big\|_2^2, \quad \forall (u,v) \in \mathcal{E}.
\end{split}
\label{eq:intra_length_constraint_c}
\end{equation}
The boundary constraints for the new variable $b_{uv}$ can be obtained by multiplying \eqref{z_bound} by \( y_{uv} \) and substituting \eqref{sub_z}:
\begin{align}
        0 \leq  b_{uv} & \leq l_{uv} \quad  \forall (u,v) \in \mathcal{E}.    \label{z_bound_new}
\end{align}
Similarly, the boundary constraints for variables associated with travelling along same side can be obtained by multiplying \eqref{d_bound_same_side} by $w_v$ and substituting \eqref{sub_same} :
\begin{align}
    0 \leq e_v \leq f_v, \ \ 0  \leq f_v  \leq & \|m_v-n_v\|_2 w_{v} \quad  \forall v \in \mathcal{V}_s.\label{z_bound_same_side_new}
\end{align}

The bilinear objective function then can be reformulated as a linear objective function after substituting \eqref{sub_z} in the objective function \eqref{objective} as given in: 
\begin{align}
    \text{minimize} \quad \sum_{(u,v) \in \mathcal{E}} b_{uv} + \sum_{v \in \mathcal{V}_s} e_{v}\label{objective_new} 
\end{align}

\subsection{ \it Reformulated flow constraints}
The flow constraints \eqref{source_sink_constraint}-\eqref{flow_constraints_max_one} associated with the path variable $y_{uv}$ remain valid. However, additional valid inequalities associated with new decision variables $a_{uv}$, $a_{uv}^{'}$, $c_{uv}$, and $c_{uv}^{'}$ are added that follow directly from the flow constraints. Given the initial charge \( q_s \), the constraint for the vehicle's initial SOC is obtained by multiplying \eqref{source_sink_constraint} by \( q_s \) and applying the appropriate substitutions, resulting in \eqref{a_source}. 
\begin{align}
     \sum_{(s,v) \in E} & a_{sv} = q_{s} \quad \quad \forall (s,v) \in \mathcal{E}. \label{a_source} 
\end{align}
Similarly, the constraint related to the vehicle's final SOC is obtained by multplying \eqref{source_sink_constraint} with \( q_g \). After necessary substitutions, this results in the following inequality:
\begin{align}
    q_{\text{min}}  \leq \sum_{(u,g) {\in} E} & a_{ug}^{'} \leq q_{\text{max}} \quad \forall (u,g) \in \mathcal{E}.     \label{a_sink} 
\end{align}
For flow constraints associated with $a_{uv}$, $a_{uv}^{'}$, $c_{uv}$ and $c_{uv}^{'}$ the entry and exit flows at a node may differ due to the possibility of different entry and exit points along a side. This implies that the following inequalities can be true:
\begin{align}
    \sum_{(u,v) \in \mathcal{E}} a_{uv}' & \neq \sum_{(v,u) \in \mathcal{E}} a_{vu}, \quad \forall v \in \mathcal{V} \setminus \{s, g\} \\
    \sum_{(u,v) \in \mathcal{E}} c_{uv}' & \neq \sum_{(v,u) \in \mathcal{E}} c_{vu}, \quad \forall v \in \mathcal{V} \setminus \{s, g\}. \label{flow_constraints_not}
\end{align}

where \( \sum_{(u,v) \in \mathcal{E}} a_{uv}' \) and \( \sum_{(u,v) \in \mathcal{E}} c_{uv}' \) represent the entry flows for SOC and position at node \( v \), respectively, while \( \sum_{(u,v) \in \mathcal{E}} a_{vu} \) and \( \sum_{(u,v) \in \mathcal{E}} c_{vu} \) represent the corresponding exit flows at node \( v \). To establish a relationship between the entry and exit flows for both SOC and position at a node, Theorem~\eqref{thm:flow_constraint} is derived.

\begin{theorem}[Flow constraints for the new variables] \label{thm:flow_constraint}
The flow constraints for the auxiliary variables \( a_{uv} \), \( a_{uv}' \), \( c_{uv} \), and \( c_{uv}' \), derived from the constraints \eqref{SOC_constraints_i}--\eqref{flow_constraints}, are given as:
\begin{subequations} \label{Flow_constraint_a_c}
\begin{align}
    \beta e_{v} {-} \alpha (f_{v} {-} e_{v}) {=} & \sum_{(u,v) \in \mathcal{E}} a_{vu}' {-} \sum_{(u,v) \in \mathcal{E}} a_{uv}, \ \forall v \in \mathcal{V}_s. \label{flow_constraint_SOC} \\
    S_v = \sum_{(u,v)  \in \mathcal{E}} & c_{uv} - \sum_{(u,v) \in \mathcal{E}} c_{vu}', \ \   \forall v \in \mathcal{V}_s.  \label{Sv_definition} \\
    S_{\text{abs}}^v \geq S_v, \ \ S_{\text{abs}}^v   \geq & -S_v,  \ \  S_{\text{abs}}^v \geq 0, \ \  \forall v \in \mathcal{V}_s.  \label{Sabs_constraints} \\
    S_{\text{abs}}^v &= \frac{f_{v}}{\| n_v - m_v \|_2}, \ \ \forall v \in \mathcal{V}_s.  \label{Sabs_definition}
\end{align}
\end{subequations}
\end{theorem}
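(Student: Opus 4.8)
The plan is to treat each side $v \in \mathcal{V}_s$ as a node at which the entering and leaving states genuinely differ, because the vehicle may travel along the side (from parameter $\lambda_{v_e}$ to $\lambda_{v_o}$), changing both its SOC and its position. I would derive each relation in \eqref{Flow_constraint_a_c} by (i) taking the corresponding ``base'' constraint written with $w_v$, (ii) rewriting it through the substitutions \eqref{sub_a}, \eqref{sub_c}, \eqref{sub_same}, and then (iii) using flow conservation to replace single-node products such as $q_{v_o}w_v$ by sums of the auxiliary edge variables incident to $v$. The enabling observation, which I would establish first, is that for every $v \in \mathcal{V}_s$ the flow constraints \eqref{flow_constraints}--\eqref{flow_constraints_max_one} force the in-degree and out-degree of $v$ to coincide and to equal $w_v$ on the feasible set; consequently the incident sums collapse, giving an entry-SOC flow equal to $q_{v_e} w_v$, an exit-SOC flow equal to $q_{v_o} w_v$, and analogously entry/exit position flows equal to $\lambda_{v_e} w_v$ and $\lambda_{v_o} w_v$. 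This is exactly the step that lets the ``entry flow'' and ``exit flow'' collapse to the single node values scaled by $w_v$.

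For the SOC relation \eqref{flow_constraint_SOC} I would start from the side-travel balance \eqref{SOC_constraints_same}; after substituting $e_v = p_v w_v$ and $f_v = d_v w_v$ it reads $q_{v_o}w_v - q_{v_e}w_v = \beta e_v - \alpha(f_v - e_v)$. Replacing $q_{v_o}w_v$ and $q_{v_e}w_v$ by the exit- and entry-SOC flow sums from the previous paragraph yields \eqref{flow_constraint_SOC} directly; this part is a clean equality and should go through with only bookkeeping.

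The position relation is the more delicate part and is where I expect the real work. Defining $S_v$ as the difference of the entry- and exit-position flow sums gives, after the same collapse, $S_v = (\lambda_{v_e} - \lambda_{v_o})w_v$, a \emph{signed} quantity because the vehicle may traverse the side in either direction. The distance actually travelled along the side is $f_v = |\lambda_{v_e} - \lambda_{v_o}|\,\|m_v - n_v\|_2\, w_v$, which I would obtain from the side-length constraint \eqref{length_constraint_same_side} together with \eqref{d_bound_same_side} (the inequality is tight at optimum, since a larger $f_v$ is never beneficial). Hence $f_v / \|n_v - m_v\|_2 = |S_v|$. To keep the model linear I cannot use $|S_v|$ directly, so I would introduce $S_{\mathrm{abs}}^v$ constrained by \eqref{Sabs_constraints}, which enforce $S_{\mathrm{abs}}^v \ge |S_v|$, and fix its value through \eqref{Sabs_definition}; combined with the tight length relation these force $S_{\mathrm{abs}}^v = |S_v| = f_v/\|n_v - m_v\|_2$, giving \eqref{Sv_definition}--\eqref{Sabs_definition}.

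The main obstacle, then, is not the algebra of the SOC equality but two points in the position argument: first, justifying rigorously that the incident-edge sums collapse, i.e. that $w_v$ equals the common in/out-degree on the relevant feasible set, so that the single active entry and exit edges pick out $\lambda_{v_e}$ and $\lambda_{v_o}$; and second, correctly handling the indeterminate sign of $\lambda_{v_e}-\lambda_{v_o}$ by tying the nonnegative distance $f_v$ to $|S_v|$ through the absolute-value gadget \eqref{Sabs_constraints}--\eqref{Sabs_definition} rather than to $S_v$ itself. I would finish by checking the two boundary cases $w_v = 0$ (all incident auxiliary variables vanish, so every relation reduces to $0=0$) and $w_v = 1$ (the relations reduce to the per-node balances above), confirming validity in both.
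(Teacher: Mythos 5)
Your proposal follows essentially the same route as the paper: start from the side-travel SOC balance \eqref{SOC_constraints_same} and the side-length constraint \eqref{length_constraint_same_side}, use the degree/flow constraints \eqref{flow_constraints}--\eqref{flow_constraints_max_one} to collapse the node quantities $q_{v_e}w_v$, $q_{v_o}w_v$, $\lambda_{v_e}w_v$, $\lambda_{v_o}w_v$ into sums of the incident auxiliary edge variables, and linearize the resulting absolute value with the $S_v$, $S_{\text{abs}}^v$ gadget. The only (immaterial) deviation is that you assert $S_{\text{abs}}^v=|S_v|$ via tightness at the optimum, whereas the constraints \eqref{Sabs_constraints}--\eqref{Sabs_definition} only encode, and the paper only derives, the valid inequality $|S_v|\le f_v/\|n_v-m_v\|_2$.
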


\begin{proof}
To establish the relationship between entry and exit flow for each node \( v \in \mathcal{V}_s \), we begin with the SOC constraints defined by \eqref{SOC_constraints_same} for the entry and exit from a side.  Using \eqref{flow_constraints} and \eqref{flow_constraints_max_one}, we substitute for \( w_{v} \), resulting in:
\begin{align}
    q_{v_o} \sum_{(v,u) \in \mathcal{E}} y_{vu} = & q_{v_e} \sum_{(u,v) \in \mathcal{E}} y_{uv} \\ \notag
    & + \left( \beta p_v - \alpha (d_v - p_v) \right) w_{v} \ \ \forall v \in \mathcal{V}_s. \label{flow_constraints_a_intermediate}
\end{align}
By substituting for \( a_{uv},  
 a_{uv}', e_{v}, \text{ and } f_{v} \) as defined in \eqref{sub_a} and \eqref{sub_same}, we obtain definition aligned with \eqref{flow_constraint_SOC} as:
\begin{equation}
    \sum_{(v,u) \in \mathcal{E}} a_{vu}' {-} \sum_{(u,v) \in \mathcal{E}} a_{uv} {=} \beta e_{v} {-} \alpha (e_{v} {-} f_{v}),  \ \forall v \in \mathcal{V}_s \label{flow_constraints_a_inter_subs}
\end{equation}
For the length constraint \eqref{length_constraint_same_side}, take the square root and multiply both sides by \( w_{v} \), and using \eqref{flow_constraints} and \eqref{flow_constraints_max_one}, we derive:
\begin{align}
    \left| \lambda_{v_o} \sum_{(v,u) \in \mathcal{E}} y_{vu} - \lambda_{v_e}\sum_{(u,v) \in \mathcal{E}} y_{uv} \right| &\leq \frac{d_v w_{v}}{\| m_v - n_v \|_2} \quad \forall v \in \mathcal{V}_s.  \label{length_constraint_same_side_inter_1} \\
    \left| \sum_{(u,v) \in \mathcal{E}} c_{vu} - \sum_{(u,v) \in \mathcal{E}} c_{uv}' \right| &\leq \frac{f_v}{\| m_v - n_v \|_2} \quad \forall v \in \mathcal{V}_s.  \label{length_constraint_same_side_inter_2}
\end{align}

\noindent To linearize the absolute value constraint in \eqref{length_constraint_same_side_inter_2}, we introduce auxiliary variables \( S_v \) and \( S_{\text{abs}}^v \), defined as:
\begin{align}
    S_v &= \sum_{(u,v) \in \mathcal{E}} c_{vu} - \sum_{(u,v) \in \mathcal{E}} c_{uv}' \notag \quad \quad \quad \forall v \in \mathcal{V}_s. \\
    S_{\text{abs}}^v &\geq S_v, \quad S_{\text{abs}}^v \geq -S_v, \quad S_{\text{abs}}^v \geq 0 \quad \forall v \in \mathcal{V}_s. \notag \\
    S_{\text{abs}}^v &= \frac{f_v}{\| m_v - n_v \|_2} \quad \quad \quad \quad \quad \quad \quad \ \ \forall v \in \mathcal{V}_s. \notag
\end{align}
\noindent These definitions align directly with the constraints in \eqref{Flow_constraint_a_c}, thus completing the proof.
\end{proof}

\subsection{Reformulated problem}
The R-MICP aims to optimize \eqref{objective_new}, subject to the constraints \eqref{SOC_constraints_i_new}--\eqref{Flow_constraint_a_c} and the flow constraints \eqref{source_sink_constraint}--\eqref{flow_constraints_max_one}. This problem can be efficiently solved using commercial solvers. Once solved, the values of \( c_{uv} \), \( c_{uv}' \), \( a_{uv} \), \( a_{uv}' \), and \( y_{uv} \) can be used to compute the parameters \( \lambda_{u_o} \), \( \lambda_{v_e} \), \( q_{u_o} \), and \( q_{v_e} \) for the nodes along the optimal path indicated by \( y_{uv} \). Consequently, the optimal path, the switching points, and the SOC at each switching point for the HUAV are determined. Moreover, by relaxing the integrality constraints on \( y_{uv} \) and \( w_{v} \), a tight lower bound can also be computed efficiently.

\section{Extension to the Traveling Salesman Problem (TSP) with intermediate quiet zones}\label{sec:TSP}


In the TSP version of the problem, the HUAV visits each of the given target locations (or targets) at least once before returning to its source. In the standard TSP, the cost between targets is typically predefined or easily calculated, often using Euclidean distances. However, the presence of intermediate \enquote{quiet zones} introduces a dependency of the cost on the sequence of visits in the TSP. This dependency arises because the cost is influenced by the initial and final SOC when traveling from one target to another. As a result, the objective is no longer limited to finding the optimal tour; it also involves simultaneously determining the optimal path, including mode-switching points, between consecutive targets.

To solve the TSP for the HUAV, we first define a graph \(G_{\text{TSP}} := (\mathcal{N}, \mathcal{L})\)
where \( \mathcal{N} \) represents the set of target nodes, and \( \mathcal{L} \) denotes the edges, which correspond to possible paths between the targets. For each edge \( e = (i, j) \in \mathcal{L} \), let \( x_{ij} \) be a binary decision variable such that \( x_{ij} = 1 \) if the edge is traversed, and \( x_{ij} = 0 \) otherwise. Noting that the goal is to construct a tour with minimum fuel consumption, the objective function is given by
\begin{align}
    \text{Minimize:} & \quad \sum_{(i,j) \in \mathcal{L}} \left( \sum_{(u,v) \in \mathcal{E}} b_{uv} + \sum_{v \in \mathcal{V}} g_{v} \right) x_{ij} \label{objective_tsp}
\end{align}
The constraints for the TSP include similar constraints presented in the MICP formulation corresponding to the path problem, which include
\begin{align}
    \text{Constraints: \eqref{SOC_constraints_o_new}--\eqref{Sabs_definition} and \eqref{source_sink_constraint}--\eqref{flow_constraints_max_one},} \label{Old_formulation}
\end{align}
for each pair of targets (including the source). Additionally, since all targets need to be visited once, two additional constraints are introduced for the TSP: one constraint to ensure that each target is visited, and another constraint to ensure that subtours are eliminated from the candidate set of tours. The two constraints are given by
\begin{align}
    \sum_{(i,j) \in \mathcal{L}} x_{ij} = \sum_{(j,i) \in \mathcal{L}} x_{ji} = 1 \quad \forall j \in \mathcal{N}, \label{Flow_constriants_TSP} \\
    \sum_{\substack{j \in Q \\ j \neq i}} x_{ij} \leq |Q| - 1, \quad \forall Q \subseteq \mathcal{N}, \; |Q| \geq 2, \label{Subtour_elimination}
\end{align}

However, solving the TSP, which requires simultaneous optimization of both the target sequence and the paths between consecutive targets, becomes computationally intractable as the map size and the number of targets increase. To address this, we propose two approaches to decouple the optimization of the TSP tour from the determination of optimal paths between consecutive targets.

\subsection{Generalized TSP approach}
In this approach, the state of charge (SOC) at each target \( i \in \mathcal{N} \) is discretized into \( d \) levels, forming a set \( S_i \) of nodes corresponding to target $i$. Here, each node in $S_i$ corresponds to a specific SOC level. The objective is to compute a tour of minimum cost that visits exactly one node from each set \( S_i \). It should be noted here that discretization of the SOC allows us to compute the cost between any two nodes selected from sets $S_i, S_j,$ $i \neq j,$ independent of the sequence in which the targets are visited in the tour. To this end, we utilize our proposed MICP formulation to compute the cost between every pair of nodes, since our MICP formulation enables the specification of the starting SOC and goal SOC. 

The resulting problem, wherein one node from each set $S_i$ needs to be visited, and the vehicle must return to its source, yields the Generalized TSP (GTSP) \cite{pop2024comprehensive}.  
To solve the GTSP, the problem is first transformed into an Asymmetric TSP (ATSP) using the Noon-Bean transformation \cite{noon1993efficient}. The resulting ATSP is then solved using the LKH heuristic \cite{helsgaun2009general}, which yields a near-optimal TSP tour. Using the tour constructed from the ATSP, the corresponding tour in the GTSP can be obtained \cite{noon1993efficient}, wherein one node in each set $S_i$ is visited. Therefore, the optimal sequence to visit the targets and the corresponding SOC level at each target are obtained. 

Once the sequence of targets to be visited is determined (or the binary variables are given), the MIBP formulation in Section \ref{sec:MIBP} is used to find the optimal path and mode-switching points between consecutive targets, thereby providing the complete TSP solution for the HUAV. However, this approach requires \(n(n-1)d^2\) number of cost computations where \( n \) represents the total number of targets, making this approach computationally expensive for large-scale problems. This will be demonstrated using comprehensive numerical results that will follow in the next section. 

\subsection{ Minimum SOC Approach}
To reduce the computational effort in solving the GTSP, the second approach assumes the HUAV departs each target node with a SOC at least equal to \( q_{\text{min}} \). This assumption is reasonable, as reaching a target with minimal SOC is sufficient to find a feasible solution for the HUAV. 
While the SOC of the vehicle is set to $q_{max}$ at departure from the source, we compute the travel costs between all the targets by assuming the SOC of the vehicle is equal to $q_{min}$ when it departs from any target. Once these costs are computed, the problem reduces to a standard TSP, which is solved using the LKH heuristic. This approach requires only \( n^2 \) travel cost computations, making it more efficient than the GTSP-based approach. After determining the optimal target sequence using LKH, the MIBP formulation (section \ref{sec:MIBP}) is used to find the optimal path and mode-switching points between consecutive targets, providing us with the complete TSP solution for the HUAV.
\section{Computational results}
\label{sec:Results}
In this section, computational results are provided to demonstrate the effectiveness of the proposed algorithm for planning the path between two given locations and for constructing a tour to visit a given set of locations. To this end, the formulated MICP was implemented under two configurations. In the first configuration, the edge selection variable \( y_{uv} \) for any $(u,v)\in \mathcal{E}$ is relaxed to lie in the interval $[0, 1]$ to provide a lower bound. On the other hand, in the second configuration, \( y_{uv} \) is set as a binary variable to yield the optimal cost and path. 

In this study, four distinct maps were used for testing. The first map was manually designed as a dense environment containing fifteen quiet zones, some of which were non-convex. The second map was generated by randomly placing fifteen polygonal quiet zones within a $12000\times 8000$ unit area.  Additionally, two real-world maps based on New York and Boston were generated, as discussed in \cite{manyam2022path}.

It should be noted here that all tests were performed on a laptop with a 12th-generation Intel i7-12700H processor and 32 GB of RAM. The algorithms were implemented in Python, and Gurobi \cite{gurobi} was used to solve the formulated mathematical programs.

 \subsection{Path planning between two locations}

In this section, we present our results for planning the path between two given locations. Furthermore, we compare our results with the current state-of-the-art approach described in \cite{manyam2022path}.
It should be recalled here that in \cite{manyam2022path}, the sides of the quiet zones and the SOC of the battery are discretized. Henceforth, we will refer to the algorithm in \cite{manyam2022path} as a ``discrete" approach.
To this end, we utilize the four maps discussed previously. For each map, 50 scenarios were tested, with randomly chosen start and goal nodes separated by an Euclidean distance exceeding a specified threshold of 1000 unit. The discharge rate \( \alpha \) and recharge rate \( \beta \) were set to 0.08 and 0.04 for the HUAV, respectively. The minimum SOC \( q_{\text{min}} \), maximum SOC \( q_{\text{max}} \), and initial SOC \( q_{\text{init}} \) were set to 20\%, 100\%, and 100\%, respectively. In the implementation of the discrete approach, we discretized the SOC levels between $q_{min}$ and $q_{max}$ into ten levels, and side lengths are discretized at points spaced 100 units apart. This approach is used to calculate an upper bound for the problem.

\textbf{Remark:} All maps were scaled to ensure their dimensions are within 100×100 units, to mitigate numerical issues encountered by the solver. To this end, all coordinates were scaled by a factor \( \gamma < 1 \), reducing the coordinate range to within 100 units. Following \eqref{SOC_constraints_o}, the discharge and recharge rates were scaled by \( 1/\gamma \), and the cost output was adjusted by the same factor. Additionally, to enhance computational efficiency, the program was terminated when the optimality gap was within $1\%$ or if the solver (Gurobi) ran for more than 5 seconds.

\begin{figure*}[h!]
    \centering
    \begin{subfigure}[t]{0.3\textwidth}
        \centering
        \includegraphics[width=\textwidth]{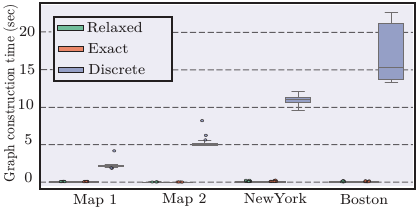}
        \caption{Graph construction time\vspace{0.5\baselineskip}}
        \label{subfig:graph_construction_time}
    \end{subfigure}
    \hfill
    \begin{subfigure}[t]{0.3\textwidth}
        \centering
        \includegraphics[width=\textwidth]{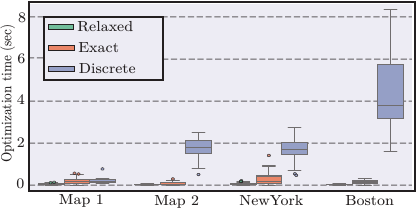}
        \caption{Optimization time for all three approaches}
        \label{subfig:optimization_time_three_approaches}
    \end{subfigure}
    \hfill
    \begin{subfigure}[t]{0.3\textwidth}
        \centering
        \includegraphics[width=\textwidth]{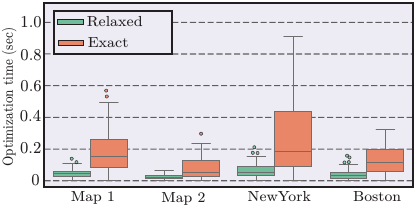}
        \caption{Optimization time comparison between relaxed and exact approach \vspace{0.5\baselineskip}}
        \label{subfig:optimization_time_two_approaches}
    \end{subfigure}
    \caption{Computational times for different maps and algorithms.}
    \label{fig:Time_taken}
\end{figure*}

\begin{figure}[htb!]
    \centering
    \begin{subfigure}[b]{0.8\columnwidth}
        \centering
        \includegraphics[width=\linewidth]{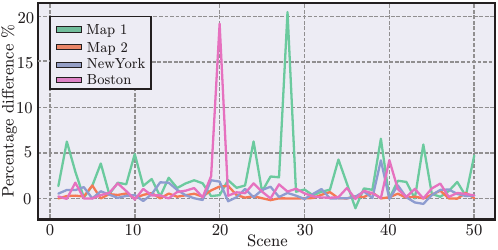}
        \caption{Percentage difference between exact solution's cost ($C_e$) and discrete solution's cost ($C_d$), defined as $100 \left(\frac{C_{\text{d}}-C_{\text{e}}}{C_{\text{d}}} \right)$.}
        \label{subfig:percentage_diff_exact_vs_discrete}
    \end{subfigure}
    \vskip 0.5em 
    \begin{subfigure}[b]{0.8\columnwidth}
        \centering
        \includegraphics[width=\linewidth]{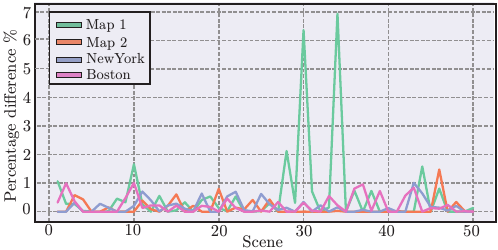}
        \caption{Percentage difference between relaxed solution's cost ($C_r$) and exact solution's cost ($C_e$), defined as $100 \left(\frac{C_{\text{e}}-C_{\text{r}}}{C_{\text{e}}} \right)$.}
        \label{subfig:percentage_diff_continous_vs_exact}
    \end{subfigure}
    \caption{Percent difference in cost of solutions for all the maps.}
    \label{fig:percent_diff}
\end{figure}

The results of the simulations are presented in Figs. \ref{fig:Time_taken}-\ref{fig:path_with_charge}, and Table \ref{tab:Time_table}. In Fig.~\ref{fig:Time_taken}, the time taken for graph construction and optimization for all three approaches is shown. From this figure, we can make the following observations:
\begin{enumerate}
    \item From Fig.~\ref{subfig:graph_construction_time}, we can observe that the time to construct the graph for the discrete approach is significantly higher than the two approaches we have proposed. This is due to the large number of nodes generated by discretization of sides of quiet zones and SOC, leading to an increase in computation time by nearly 100 times.
    \item Furthermore, from Table~\ref{tab:Time_table} and Fig.~\ref{subfig:optimization_time_three_approaches}, we can observe that across all maps, the relaxed formulation required substantially less optimization time (by an order of magnitude) compared to the discrete approach. For the exact formulation, optimization time on Map 1 was comparable to the discrete approach, while for the other maps, it was lower by an order of magnitude.
    \item From Fig.~\ref{subfig:optimization_time_two_approaches}, we can also observe that the relaxed formulation took significantly less time for optimization (by around 4 to 5 times) compared to the exact formulation in all the maps.
\end{enumerate}
From these observations, we can observe that our proposed approaches outperform the discrete approach in terms of computation time.

\begin{table}[h!]
\centering
\caption{Average Optimization time (opt) and Average Graph Construction time (Graph) in secs for different algorithms and maps.}
\resizebox{\columnwidth}{!}{
\begin{tabular}{|c|p{0.6cm}|p{0.5cm}|p{0.5cm}|p{0.5cm}|p{0.5cm}|p{0.6cm}|p{0.5cm}|p{0.6cm}|}
\hline
 & \multicolumn{2}{c|}{Map 1} & \multicolumn{2}{c|}{Map 2} & \multicolumn{2}{c|}{New York} & \multicolumn{2}{c|}{Boston} \\ \cline{2-9}
                  & Opt & Graph                   & Opt  & Graph            & Opt  & Graph            & Opt  & Graph      \\ \hline
Relaxed        & 0.046    & 0.019          & 0.024    & 0.041          & 0.065    & 0.020          & 0.044    & 0.034   \\ \hline
Exact            & 0.192    & 0.019          & 0.076    & 0.047          & 0.302    & 0.021          & 0.178    & 0.035    \\ \hline
Discrete          & 0.205    & 2.169          & 1.753    & 5.161          & 1.731    & 17.355          & 4.296    & 10.968   \\ \hline
\end{tabular}
}
\label{tab:Time_table}
\end{table}
Now, we desire to compare the cost of the solution obtained from the discrete approach and the two approaches (exact and relaxed) that we have proposed. A comparison of the same for the four maps is shown in Fig.~\ref{fig:percent_diff}. From this figure, we can make the following observations:
\begin{enumerate}
    \item From Fig.~\ref{subfig:percentage_diff_exact_vs_discrete}, we can observe that the exact cost is consistently lower than the upper bound cost obtained via the discrete approach, with a single exception observed in Map 1. The average percentage differences across maps are \(2.19\%\) for Map 1, \(0.31\%\) for Map 2, \(0.58\%\) for New York, and \(1.1\%\) for Boston. Therefore, we can conclude that the discrete approach is marginally suboptimal compared to the exact formulation. Hence, coupled with the increased optimization time requirement for the discrete approach, we can observe the effectiveness of our proposed MICP formulation over the discrete approach.
    \item From Fig.~\ref{subfig:percentage_diff_continous_vs_exact}, we can observe that in all fifty scenarios across different maps (except for two scenarios in Map 1), the percentage gap between the relaxed and exact formulations remains below \(2\%\).
    Furthermore, the average percentage differences for Map 1, Map 2, New York, and Boston are \(0.55\%\), \(0.12\%\), \(0.21\%\), and \(0.17\%\), respectively. Since the cost obtained from the exact formulation is within an optimality gap of \(1\%\), we can conclude that relaxing the edge selection variables \(y_{uv}\) provides a tight lower bound.
\end{enumerate}
Therefore, from these observations, we can conclude that we can obtain a tight lower bound and a near-optimal solution from the formulation that we proposed. 


It should be noted that in addition to obtaining the cost of the path traversed by the HUAV using the exact formulation, we can also obtain the optimal path, switching points, and the SOC at each switching point, as depicted in Fig.~\ref{fig:path_with_charge} for Map 1. In this scenario, the HUAV travels along the boundary of one of the quiet zones because its SOC is insufficient for directly crossing the quiet zone using a straight line segment. 


\begin{figure}[htb!]
 \begin{subfigure}[b]{0.8\columnwidth}
     \centering
     \includegraphics[width=\linewidth]{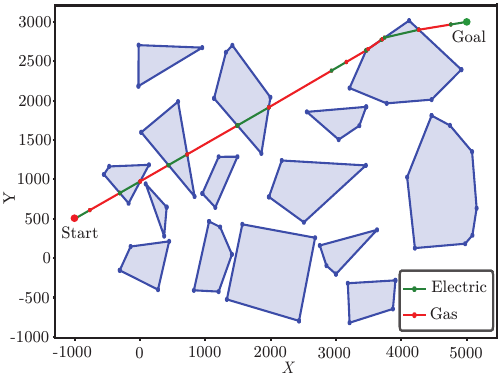}
     \caption{Depiction of the path of the HUAV along with the switching points between electric and fuel modes}
     \label{subfig:path_final}
 \end{subfigure}
 \hfill
 \begin{subfigure}[b]{0.8\columnwidth}
     \centering
     \includegraphics[width=\linewidth]{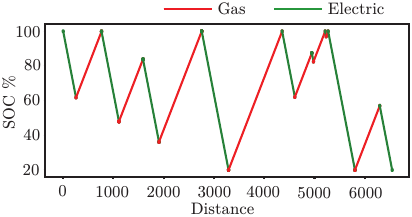}
     \caption{Depiction of the SOC of the HUAV as it charges and recharges as a function of the distance traversed}
     \label{subfig:path_charge}
 \end{subfigure}
    \caption{Demonstration of path obtained from solving the MICP of the HUAV and the change in SOC for Map 1}
    \label{fig:path_with_charge}
\end{figure}

\subsection{TSP with intermediate quiet zones}

Having observed the effectiveness of our approach for planning the path between two locations, we now present the results for the TSP variant of the problem with quiet zones in this section.
To this end, numerical results are presented for both approaches: (1) the GTSP method and (2) the minimum SOC assumption method. For these experiments, the discharge rate \(\alpha\) and the recharge rate \(\beta\) were fixed to be $0.1$ and $0.05,$ respectively. Furthermore, multiple instances were generated for varying numbers of target points across different maps. A total of nearly 240 instances, each with different maps and target nodes, were constructed.

\begin{figure*}[htb!] 
 \begin{subfigure}[t]{0.45\textwidth}
     \centering
     \includegraphics[width=\textwidth]{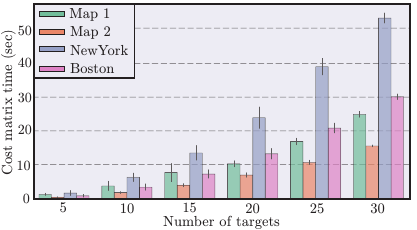}
     \subcaption{Time taken for cost matrix computation.}
     \label{subfig:cost_matrix_time}
     \hspace{0.05\linewidth}
 \end{subfigure}
 \hfill
    \begin{subfigure}[t]{0.45\textwidth}
     \centering
     \includegraphics[width=\textwidth]{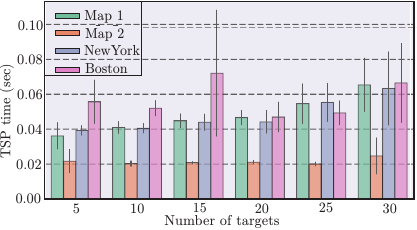}
     \subcaption{Time taken for TSP solution}
     \label{subfig:TSP_time}
 \end{subfigure}
 \hfill
  \begin{subfigure}[t]{0.45\textwidth}
     \centering
     \includegraphics[width=\textwidth]{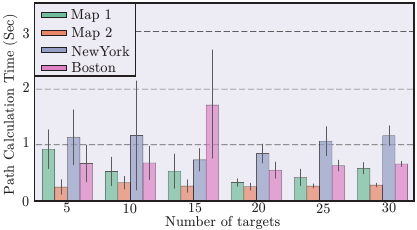}
     \subcaption{Time taken to compute optimal path and mode switching points between the targets in sequence}
     \label{subfig:path_calc_time}
     \hspace{0.05\linewidth}
 \end{subfigure}
 \hfill
   \begin{subfigure}[t]{0.45\textwidth}
     \centering
     \includegraphics[width=\textwidth]{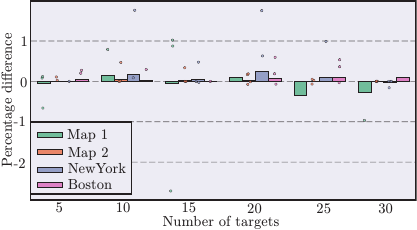}
     \subcaption{Percentage difference between the costs obtained using the GTSP method and the min SOC approach.}
     \label{subfig:diff_two_TSP_methods}
 \end{subfigure}
    \caption{Results for the minimum SOC assumption method and its comparison with GTSP method}
    \label{fig:Result_TSP}
\end{figure*}

\begin{figure}[htb!]
     \centering
     \includegraphics[width=\linewidth]{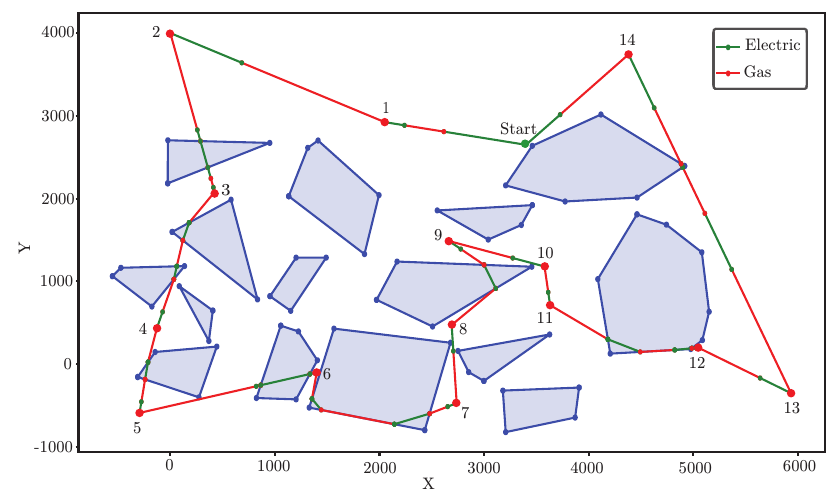}
     \caption{The path found using the \(q_{\text{min}}\) assumption approach is presented, with the numbers indicating the sequence of visits.}
     \label{fig:TSP_path}
 \end{figure}
 
Noting from Table~\ref{tab:Time_table} that the relaxed MICP formulation requires 4-5 times less computation time compared to the exact MICP approach, we use relaxed MICP formulation to calculate the node-to-node cost matrix required to solve the TSP. Once the tour is determined, the exact paths between the targets can be obtained by applying the exact MICP formulation to each consecutive pair of targets in the sequence. In Figs.~\ref{subfig:cost_matrix_time}, \ref{subfig:TSP_time}, and \ref{subfig:path_calc_time}, we show the computation time for the second approach, wherein the HUAV starts from \(q_{\text{min}}\) at each target. 
From these figures, we can make the following observations:
\begin{enumerate}
    \item From Fig.~\ref{subfig:cost_matrix_time}, we can observe that the computation time for computing the cost matrix increases with increasing number of targets. However, we can observe that the maximum computation time is around 1 minute. In contrast, for the GTSP method, the computation time to generate the cost matrix for Map 1, Map 2, New York, and Boston Maps were 1980, 2290, 5117, and 3278 seconds, respectively, for 31 targets. It should be noted that for the GTSP implementation, for instances with a lesser number of targets, we select a subset of targets randomly from the generated 31 targets. In this regard, we compute the cost matrix once per map for the GTSP implementation. Nevertheless, we can observe that the cost matrix computation is substantially more expensive than the method involving the assumption that the HUAV SOC is $q_{min}$ at each target.

    \item In Fig.~\ref{subfig:TSP_time}, the computation time for the TSP is shown for an increasing number of targets. It can be observed that the computation time for computing the tour is less than a second due to the use of LKH \cite{helsgaun2009general}, which is a heuristic that yields a near-optimal tour for very large instances of TSP.

    \item In Fig.~\ref{subfig:path_calc_time}, the path computation time is shown. From this figure, we can observe that the computation time does not increase with the number of targets since, in a fixed map size, increasing the number of targets leads to closer pairs, simplifying optimization. Conversely, fewer targets require traversing more quiet zones, increasing computational effort.
\end{enumerate}

In addition, we desire to compare the solution cost for the TSP from the two discussed approaches. A summary of this comparison is shown in Fig.~\ref{subfig:diff_two_TSP_methods}. In this figure, the percentage cost difference, defined as \( \left( \frac{C_A - C_G}{C_G}  \right) \), between the GTSP method (\(C_G\)) and the minimum SOC assumption method (\(C_A\)) is shown. From this figure, we can observe that the average percent difference is small, with the maximum difference not exceeding \(2\%\). Therefore, the assumption of the HUAV starting at \(q_{\text{min}}\) at each target is reasonable to reduce the computation time without compromising on the solution quality. A demonstration of the tour obtained using the $q_{min}$ assumption method is shown in Fig.~\ref{fig:TSP_path}. In this figure, the tour of the HAUV with the sequence in which the targets need to be visited, the path between each pair of targets, and the switching points are shown.

\section{Conclusion}\label{sec:conclusions}

In this paper, we addressed the path planning problem for a gas-electric HUAV operating in environments with noise-restricted zones, referred to as quiet zones. This problem is particularly relevant in an environment containing residential areas where noise pollution is strictly regulated. Our proposed algorithm enables the HUAV to traverse these quiet zones using the low-noise electric mode, avoiding suboptimal detours.

In our approach, SOC and position of the HUAV along the sides of the quiet zones were modeled as continuous variables, and the problem was formulated as a mixed integer bilinear program. By introducing auxiliary variables, the bilinear constraints were transformed into a mixed integer conic program, which can be efficiently solved to identify near-optimal path and mode switching points. To validate this formulation, we conducted a comprehensive numerical analysis and compared it with a state-of-the-art discrete approach. Our empirical study on 200 instances highlights the advantages of the exact MICP approach, which achieved an average travel cost reduction of around 1\% over the discrete approach, alongside a tenfold reduction in computation time. Additionally, the relaxed MICP formulation provides a tight lower bound with an average gap of only 0.26\% and requires 4.5 times less computation time than the exact MICP formulation.

The proposed MICP was also utilized to solve the TSP for a HUAV in environments with quiet zones. We developed two methodologies: a GTSP approach and a minimum SOC assumption approach. While the GTSP approach yields accurate solutions, it incurs high computational costs due to the large number of cost calculations required. In contrast, the minimum SOC assumption approach, which assumes the HUAV reaches each target with minimum SOC, produced solutions within 1\% of the GTSP results while significantly improving computational efficiency.

In summary, we proposed a new approach to the path planning problem for HUAVs in noise-restricted environments and demonstrated its effectiveness through numerical comparisons with state-of-the-art methods and its application to a TSP variant of the problem.

\section{Acknowledgments}
This work was cleared for public release under APRS-RYA-2025-05-00001 and the views or
opinions expressed in this work are those of the authors and do not reflect any position
or the opinion of the United States Government, US Air Force, or Air Force Research
Laboratory.

\noindent The authors also thank Dr. Satyanarana Gupta Manyam for providing the code for the algorithm developed in \cite{manyam2022path}.

\bibliographystyle{IEEEtran}
\bibliography{References}

\begin{thebibliography}{10}
\providecommand{\url}[1]{#1}
\csname url@samestyle\endcsname
\providecommand{\newblock}{\relax}
\providecommand{\bibinfo}[2]{#2}
\providecommand{\BIBentrySTDinterwordspacing}{\spaceskip=0pt\relax}
\providecommand{\BIBentryALTinterwordstretchfactor}{4}
\providecommand{\BIBentryALTinterwordspacing}{\spaceskip=\fontdimen2\font plus
\BIBentryALTinterwordstretchfactor\fontdimen3\font minus
  \fontdimen4\font\relax}
\providecommand{\BIBforeignlanguage}[2]{{%
\expandafter\ifx\csname l@#1\endcsname\relax
\typeout{** WARNING: IEEEtran.bst: No hyphenation pattern has been}%
\typeout{** loaded for the language `#1'. Using the pattern for}%
\typeout{** the default language instead.}%
\else
\language=\csname l@#1\endcsname
\fi
#2}}
\providecommand{\BIBdecl}{\relax}
\BIBdecl

\bibitem{goyal2018urban}
R.~Goyal, C.~Reiche, C.~Fernando, J.~Serrao, S.~Kimmel, A.~Cohen, and
  S.~Shaheen, ``Urban air mobility (uam) market study,'' Tech. Rep., 2018.

\bibitem{cohen2021urban}
A.~P. Cohen, S.~A. Shaheen, and E.~M. Farrar, ``Urban air mobility: History,
  ecosystem, market potential, and challenges,'' \emph{IEEE Transactions on
  Intelligent Transportation Systems}, vol.~22, no.~9, pp. 6074--6087, 2021.

\bibitem{button2021faith}
K.~Button, ``Faith in batteries,'' \emph{Aerospace America}, vol.~59, no.~8,
  pp. 37--42, 2021.

\bibitem{mohsan2022towards}
S.~A.~H. Mohsan, M.~A. Khan, F.~Noor, I.~Ullah, and M.~H. Alsharif, ``Towards
  the unmanned aerial vehicles (uavs): A comprehensive review,'' \emph{Drones},
  vol.~6, no.~6, p. 147, 2022.

\bibitem{townsend2020comprehensive}
A.~Townsend, I.~N. Jiya, C.~Martinson, D.~Bessarabov, and R.~Gouws, ``A
  comprehensive review of energy sources for unmanned aerial vehicles, their
  shortfalls and opportunities for improvements,'' \emph{Heliyon}, vol.~6,
  no.~11, 2020.

\bibitem{kusmierek2023review}
A.~Ku{\'s}mierek, C.~Gali{\'n}ski, and W.~Stalewski, ``Review of the hybrid
  gas-electric aircraft propulsion systems versus alternative systems,''
  \emph{Progress in Aerospace Sciences}, vol. 141, p. 100925, 2023.

\bibitem{tao2019state}
L.~Tao, Y.~Zhou, L.~Zicun, and X.~Zhang, ``State of art on energy management
  strategy for hybrid-powered unmanned aerial vehicle,'' \emph{Chinese Journal
  of Aeronautics}, vol.~32, no.~6, pp. 1488--1503, 2019.

\bibitem{farajollahi2022hybrid}
A.~H. Farajollahi, M.~Rostami, and M.~Marefati, ``A hybrid-electric propulsion
  system for an unmanned aerial vehicle based on proton exchange membrane fuel
  cell, battery, and electric motor,'' \emph{Energy Sources, Part A: Recovery,
  Utilization, and Environmental Effects}, vol.~44, no.~1, pp. 934--950, 2022.

\bibitem{kronoupsiloneter1970airplane}
K.~J. Krono$\upsilon$eter and G.~W. Somerville, ``Airplane cockpit noise levels
  and pilot hearing sensitivity,'' \emph{Archives of Environmental Health: An
  International Journal}, vol.~20, no.~4, pp. 495--499, 1970.

\bibitem{watkins2020ten}
S.~Watkins, J.~Burry, A.~Mohamed, M.~Marino, S.~Prudden, A.~Fisher, N.~Kloet,
  T.~Jakobi, and R.~Clothier, ``Ten questions concerning the use of drones in
  urban environments,'' \emph{Building and Environment}, vol. 167, p. 106458,
  2020.

\bibitem{rizzi2020urban}
S.~A. Rizzi, D.~L. Huff, D.~D. Boyd, P.~Bent, B.~S. Henderson, K.~A. Pascioni,
  D.~C. Sargent, D.~L. Josephson, M.~Marsan, H.~B. He \emph{et~al.}, ``Urban
  air mobility noise: Current practice, gaps, and recommendations,'' Tech.
  Rep., 2020.

\bibitem{lieh2011design}
J.~Lieh, E.~Spahr, A.~Behbahani, and J.~Hoying, ``Design of hybrid propulsion
  systems for unmanned aerial vehicles,'' in \emph{47th AIAA/ASME/SAE/ASEE
  Joint Propulsion Conference \& Exhibit}, 2011, p. 6146.

\bibitem{an2024design}
S.~An, G.~Cai, P.~Xu, M.~Dai, and G.~Yang, ``The design of improved series
  hybrid power system based on compound-wing vtol,'' \emph{Drones}, vol.~8,
  no.~11, p. 634, 2024.

\bibitem{dombrovschi2024acoustic}
M.~Dombrovschi, M.~Deaconu, L.~Cristea, T.~F. Frigioescu, G.~Cican, G.-P.
  Badea, and A.-G. Totu, ``Acoustic analysis of a hybrid propulsion system for
  drone applications,'' in \emph{Acoustics}, vol.~6, no.~3.\hskip 1em plus
  0.5em minus 0.4em\relax MDPI, 2024, pp. 698--712.

\bibitem{kim2018review}
H.~D. Kim, A.~T. Perry, and P.~J. Ansell, ``A review of distributed electric
  propulsion concepts for air vehicle technology,'' in \emph{2018 AIAA/IEEE
  Electric Aircraft Technologies Symposium (EATS)}.\hskip 1em plus 0.5em minus
  0.4em\relax IEEE, 2018, pp. 1--21.

\bibitem{karaman2011anytime}
S.~Karaman, M.~R. Walter, A.~Perez, E.~Frazzoli, and S.~Teller, ``Anytime
  motion planning using the rrt,'' in \emph{2011 IEEE international conference
  on robotics and automation}.\hskip 1em plus 0.5em minus 0.4em\relax IEEE,
  2011, pp. 1478--1483.

\bibitem{geraerts2004comparative}
R.~Geraerts and M.~H. Overmars, ``A comparative study of probabilistic roadmap
  planners,'' in \emph{Algorithmic foundations of robotics V}.\hskip 1em plus
  0.5em minus 0.4em\relax Springer, 2004, pp. 43--57.

\bibitem{aurenhammer1991voronoi}
F.~Aurenhammer, ``Voronoi diagrams—a survey of a fundamental geometric data
  structure,'' \emph{ACM Computing Surveys (CSUR)}, vol.~23, no.~3, pp.
  345--405, 1991.

\bibitem{vadakkepat2000evolutionary}
P.~Vadakkepat, K.~C. Tan, and W.~Ming-Liang, ``Evolutionary artificial
  potential fields and their application in real time robot path planning,'' in
  \emph{Proceedings of the 2000 congress on evolutionary computation. CEC00
  (Cat. No. 00TH8512)}, vol.~1.\hskip 1em plus 0.5em minus 0.4em\relax IEEE,
  2000, pp. 256--263.

\bibitem{adlakha2023integration}
R.~Adlakha, W.~Liu, S.~Chowdhury, M.~Zheng, and M.~Nouh, ``Integration of
  acoustic compliance and noise mitigation in path planning for drones in
  human--robot collaborative environments,'' \emph{Journal of Vibration and
  Control}, vol.~29, no. 19-20, pp. 4757--4771, 2023.

\bibitem{sarhan2025noise}
S.~Sarhan, M.~Rinaldi, S.~Primatesta, and G.~Guglieri, ``Noise-aware uav path
  planning in urban environment with reinforcement learning,''
  \emph{Engineering Proceedings}, vol.~90, no.~1, p.~3, 2025.

\bibitem{manyam2022path}
S.~G. Manyam, D.~W. Casbeer, S.~Darbha, I.~E. Weintraub, and K.~Kalyanam,
  ``Path planning and energy management of hybrid air vehicles for urban air
  mobility,'' \emph{IEEE Robotics and Automation Letters}, vol.~7, no.~4, pp.
  10\,176--10\,183, 2022.

\bibitem{scott2024noise}
D.~Scott, S.~G. Manyam, I.~E. Weintraub, D.~W. Casbeer, and M.~Kumar, ``Noise
  aware path planning and power management of hybrid fuel uavs,'' \emph{arXiv
  preprint arXiv:2402.17708}, 2024.

\bibitem{marcucci2024shortest}
T.~Marcucci, J.~Umenberger, P.~Parrilo, and R.~Tedrake, ``Shortest paths in
  graphs of convex sets,'' \emph{SIAM Journal on Optimization}, vol.~34, no.~1,
  pp. 507--532, 2024.

\bibitem{pop2024comprehensive}
P.~C. Pop, O.~Cosma, C.~Sabo, and C.~P. Sitar, ``A comprehensive survey on the
  generalized traveling salesman problem,'' \emph{European Journal of
  Operational Research}, vol. 314, no.~3, pp. 819--835, 2024.

\bibitem{noon1993efficient}
C.~E. Noon and J.~C. Bean, ``An efficient transformation of the generalized
  traveling salesman problem,'' \emph{INFOR: Information Systems and
  Operational Research}, vol.~31, no.~1, pp. 39--44, 1993.

\bibitem{helsgaun2009general}
K.~Helsgaun, ``General k-opt submoves for the lin--kernighan tsp heuristic,''
  \emph{Mathematical Programming Computation}, vol.~1, pp. 119--163, 2009.

\bibitem{gurobi}
\BIBentryALTinterwordspacing
{Gurobi Optimization, LLC}, ``{Gurobi Optimizer Reference Manual},'' 2024.
  [Online]. Available: \url{https://www.gurobi.com}
\BIBentrySTDinterwordspacing

\end{thebibliography}

\section{Appendix}

{\bf Boundary parameters specification:}
\noindent Given sides $u$ and $v$ belonging to different quiet zones, we explain the heuristic we use to identify the parameters $\underline{\lambda}_{u}^{uv}$, $\overline{\lambda}_{u}^{uv}$, and  $\underline{\lambda}_{v}^{uv}$, $\overline{\lambda}_{v}^{uv}$. To do this, first connect the corners of the side $u$ to the corners the side $v$ using four line segments (refer to Fig.~\ref{fig:Graph cases}). Now, consider the following three cases:

\begin{itemize}
    \item \textbf{Sides are completely visible:} If none of the four line segments connecting the corners intersect any quiet zone boundary as shown in Fig. ~\ref{fig:Fully_Visible}, travel is possible between any point in side $u$ and any point in side $v$. In this case,  $\underline{\lambda}_{u}^{uv}=0$, $\overline{\lambda}_{u}^{uv}=1$, and  $\underline{\lambda}_{v}^{uv}=0$, $\overline{\lambda}_{v}^{uv}=1$.
    
    \item \textbf{Sides are not visible:} If more than one line segment connecting the corners intersects the boundaries of a quiet zone, as illustrated in Fig. ~\ref{fig:Not_visible}, no directly travel is possible between $u$ and $v$. In this case, there is no edge added between the two sides.
    
    \item \textbf{Sides are partially visible:} If exactly one of the four line segments intersects a quiet zone boundary (Fig.~\ref{fig:partial_visibility}, the portions of the sides that can be used for direct travel between them is constrained. In Fig.~\ref{fig:partial_visibility}, we extend the side $v$ to intersect side $u$ at $x_u(\lambda_in)$. Therefore, we set $\underline{\lambda}_{u}^{uv}=\lambda_{in}$, $\overline{\lambda}_{u}^{uv}=1$, and  $\underline{\lambda}_{v}^{uv}=0$, $\overline{\lambda}_{v}^{uv}=1$.
\end{itemize}

\begin{figure}[htb!]
 \begin{subfigure}[b]{0.45\linewidth}
     \centering
     \includegraphics[width=\textwidth]{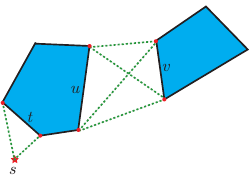}
     \caption{Sides $u$ and $v$ are completely visible.}
     \label{fig:Fully_Visible}
 \end{subfigure}
 \hfill
 \begin{subfigure}[b]{0.45\linewidth}
     \centering
     \includegraphics[width=\textwidth]{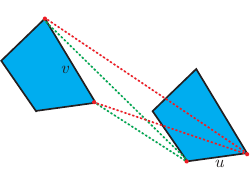}
     \caption{Sides $u$ and $v$ are not visible.}
     \label{fig:Not_visible}
 \end{subfigure}
  \hfill
 \begin{subfigure}[b]{0.45\linewidth}
     \centering
     \includegraphics[width=\textwidth]{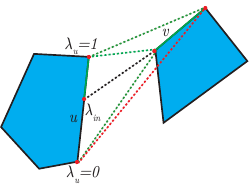}
     \caption{Sides $u$ and $v$ are partially visible.}
     \label{fig:partial_visibility}
 \end{subfigure}
    \caption{Three cases considered for specifying the bounding parameters. The green line segments do not intersect any quiet zone boundaries, whereas the red line segments intersect the quiet zone boundaries.}
    \label{fig:Graph cases}
\end{figure}

\end{document}